\documentclass{amsart}

\usepackage[font=small,labelfont=bf,up,textfont=it,up]{caption}

\usepackage[english]{babel}
\usepackage{tikz,graphicx}
\usetikzlibrary{matrix,arrows}

\usepackage{amsmath, amssymb, amsthm}

\newtheorem{theorem}{Theorem}[section]
\newtheorem{proposition}[theorem]{Proposition}

\newtheorem{corollary}[theorem]{Corollary}
\theoremstyle{definition}
\newtheorem{definition}[theorem]{Definition}
\newtheorem{remark}[theorem]{Remark}
\newtheorem{example}[theorem]{Example}

\newcommand{\cc}{\mathbb{C}}
\newcommand{\cp}{\mathbb{C}_p}
\newcommand{\nn}{\mathbb{N}}
\newcommand{\zz}{\mathbb{Z}}
\renewcommand{\mod}{\text{ mod}\ }

\makeatletter
\@namedef{subjclassname@2020}{\textup{2020} Mathematics Subject Classification}
\makeatother

\begin{document}

\author{Heiko Knospe, Lawrence C. Washington}
\title{Dirichlet Series Expansions of p-adic L-Functions}

\email{heiko.knospe@th-koeln.de, lcw@umd.edu}

\subjclass[2020]{Primary: 11R23. Secondary: 11R42, 11S80, 11M41}
\begin{abstract}
We study $p$-adic $L$-functions $L_p(s,\chi)$ for Dirichlet characters $\chi$. 
We show that $L_p(s,\chi)$ has a Dirichlet series expansion for each regularization parameter $c$ that is prime to $p$ and the conductor of $\chi$. 
The expansion is proved by transforming a known formula for $p$-adic $L$-functions and by controlling the limiting behavior. 
A finite number of Euler factors can be factored off  in a natural manner from the $p$-adic Dirichlet series.
We also provide an alternative proof of the expansion using $p$-adic measures and give an explicit formula for the values of the regularized Bernoulli distribution.
The result is particularly simple for $c=2$, where we obtain a Dirichlet series expansion that is similar to the complex case.  
\end{abstract}

\maketitle

\section{Introduction}
\label{intro}
Let $p$ be a prime, let $q=p$ if $p$ is odd and $q=4$ if $p=2$, and let $\chi$ be a Dirichlet character of conductor $f$.  
A {\em $p$-adic $L$-function} $L_p(s,\chi)$ for a Dirichlet character $\chi$ is a $p$-adic meromorphic function and an analogue of the complex $L$-function. 
For powers of the Teichm\"uller character $\omega$ of conductor $q$, one obtains the {\em $p$-adic zeta functions} $\zeta_{p,i}=L_p(s,\omega^{1-i})$, where $i=0,\, 1,\, \dots,\, p-2$ ($i=0, 1$ if $p=2$).
It is well known that $L_p(s,\chi)$ is identically zero for odd $\chi$. $p$-adic $L$-functions have a long history and the primary constructions going back to Kubota-Leopoldt \cite{kubota} and Iwasawa \cite{iwasawa} are via the interpolation of special values  of  complex $L$-functions. 
It can also be shown that $p$-adic $L$-functions are in fact Iwasawa functions. \\

It is well known that for $Re(s) > 0$, 
$$
(1-2^{1-s}) \zeta(s) = \sum_{n=1}^{\infty} \frac{(-1)^{n+1}}{n^s}$$
and, more generally, if $c\ge 2$ is an integer,  
$$(1-\chi(c)c^{1-s})L(s, \chi) = \sum_{n=1}^{\infty} \chi(n) \frac{a_{c,n}}{n^s},
$$
where $a_{c,n} = 1-c$ if $n\equiv 0 \mod c$ and $a_{c,n} = 1$ if $n\not\equiv 0 \mod c$.
In the following, we derive similar, but slightly different, expansions for $p$-adic $L$-functions.
 
An explicit formula for $L_p(s,\chi)$ is given in \cite{washington} (Theorem 5.11): let $F$ be any multiple of $q$ and $f$. Then $L_p(s,\chi)$ is a meromorphic function (analytic if $\chi \neq 1)$
on $\{ s \in \cp \ |\ |s| < qp^{-1/(p-1)} \}$ such that

\begin{equation}
 L_p(s,\chi) = \frac{1}{F} \frac{1}{s-1} \sum^F_{\substack{a=1 \\ p\, \nmid\, a}} \chi(a) \langle a \rangle^{1-s} \sum_{j=0}^{\infty} \binom{1-s}{j} \left( \frac{F}{a} \right)^j B_j .
  \label{wa511}
\end{equation}

In Section \ref{expansion}, we will use formula (\ref{wa511}) to derive a Dirichlet series expansion of $L_p(s,\chi)$. \\

 $p$-adic $L$-functions can be also be defined using distributions and measures.  Let $\chi$ have conductor $f=dp^m$ with $(d,p)=1$. Choose an integer $c \geq 2$, where 
 $(c,dp)=1$. Then there is a measure $E_{1,c}$ on $(\zz/d \zz)^{\times } \times\, \zz_p^{\times}$ (the {\em regularized Bernoulli distribution}) such that
 
  \begin{equation}
  -(1- \chi(c) \langle c \rangle^{1-s})  L_p(s, \chi) =  \int_{(\zz/d \zz)^{\times } \times\, \zz_p^{\times}} \chi \omega^{-1}(a)\langle a\rangle^{-s} \ dE_{1,c} 
  \label{wa122}
\end{equation}
(see \cite{washington} Theorem 12.2). In Section \ref{regularized}, we give an explicit formula for the values of $E_{1,c}$ and derive the Dirichlet series expansion from (\ref{wa122}).\\

The expansion is particularly simple for $c=2$, and this parameter can be used for $p \neq 2$ and Dirichlet characters with odd conductor. For this case we obtain similar results as in \cite{delbourgo2006dirichlet}, \cite{delbourgo2009}, and \cite{kim}. In Section \ref{examples}, we provide examples for different parameters $c$.

\section{Expansions of $p$-adic $L$-Functions}
\label{expansion}

First, we derive an approximation of $L_p(s,\chi)$ that is close to the original definition of Kubota-Leopoldt (see \cite{kubota}).\\

For $r \in \cc_p^{\times}$ we write $\delta(r)$ for a term with $p$-adic absolute value $\le |r|$.

\begin{proposition}\label{firstprop} Let $p$ be a prime number, $\chi$ an even Dirichlet character of conductor $f$, and $F$ a multiple of $q$ and $f$.  
For $s \in \cc_p$ with $|s| < qp^{-1/(p-1)}$, we have
\begin{equation}
 L_p(s,\chi) = \frac{1}{F} \frac{1}{s-1}  \sum^F_{\substack{a=1 \\ p\, \nmid\, a}} \chi(a) \langle a \rangle^{1-s}  + \delta(F/qp) .
 \label{kl}
 \end{equation}
 \label{klformula}
 \end{proposition}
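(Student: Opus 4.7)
The plan is to start from the explicit formula (\ref{wa511}) and isolate the $j=0$ term of the inner sum, which contributes exactly the claimed main term since $\binom{1-s}{0} B_0 = 1$. The remaining task is to show that the tail
\[
E := \frac{1}{F(s-1)} \sum^F_{\substack{a=1 \\ p\, \nmid\, a}} \chi(a) \langle a \rangle^{1-s} \sum_{j=1}^{\infty} \binom{1-s}{j} \left( \frac{F}{a} \right)^j B_j
\]
has $p$-adic absolute value at most $|F/qp|_p$. I would treat the $j \geq 2$ tail and the $j=1$ term separately, since the latter is the most delicate.

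For $j \geq 2$, after the factor $1/F$ is absorbed, each summand carries a factor $(F/a)^{j}/F = F^{j-1}/a^j$ of $p$-adic size $|F|_p^{j-1}$. Combining this with von Staudt--Clausen ($|B_j|_p \le p$) and the standard estimate on $|\binom{1-s}{j}|_p$ coming from Legendre's formula for $v_p(j!)$ and the hypothesis $|s|<qp^{-1/(p-1)}$, the $j$-th term decays like $|F|_p^{j-1}$ times a geometric factor bounded by the radius-of-convergence condition. The ultrametric inequality then gives the contribution $\le |F|_p \cdot (\text{const}) \le |F/qp|_p$.

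The main obstacle is the $j=1$ term. Here the factor $\binom{1-s}{1}=1-s$ cancels $s-1$ in the denominator, so
\[
E_1 = -B_1 \sum^F_{\substack{a=1 \\ p\, \nmid\, a}} \chi\omega^{-1}(a)\,\langle a\rangle^{-s},
\]
after writing $\chi(a)\langle a\rangle^{1-s}/a = \chi\omega^{-1}(a)\langle a\rangle^{-s}$. A naive pointwise bound gives only $O(1)$, not $O(F/qp)$, so we must use that $\chi$ is even. The plan is to pair $a$ with $F-a$ (both lie in the index set since $p\mid F$). Since $\chi(-1)=1$ but $\omega(-1)=-1$, we have $\chi\omega^{-1}(F-a) = -\chi\omega^{-1}(a)$. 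Moreover $\langle F-a\rangle = \langle a\rangle(1-F/a)$, so $\langle F-a\rangle^{-s} = \langle a\rangle^{-s}(1-F/a)^{-s}$, and expanding the binomial series yields $1-(1-F/a)^{-s}=sF/a+O(F^2)$. The paired contribution is therefore $\chi\omega^{-1}(a)\langle a\rangle^{-s}\cdot [1-(1-F/a)^{-s}]$, of $p$-adic size at most $|s|_p\,|F|_p$. The ultrametric inequality over pairs gives $|E_1|_p \le |B_1|_p\,|s|_p\,|F|_p$, which under $|s|<qp^{-1/(p-1)}$ is strictly less than $|F/qp|_p$ in all cases ($p$ odd or $p=2$).

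Putting the two bounds together and invoking the ultrametric inequality one last time gives $|E|_p \le |F/qp|_p$, proving (\ref{kl}). I expect the $j=1$ step to require the most care, both in verifying that the symmetry pairing is well-defined (no fixed point coprime to $p$ exists, since $F/2$ would have to be divisible by $p$) and in checking that the binomial expansion $(1-F/a)^{-s}$ converges $p$-adically in the prescribed disk.
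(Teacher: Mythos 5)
Your proposal is correct and follows essentially the same route as the paper: the main term is the $j=0$ contribution of (\ref{wa511}), the $j\ge 2$ tail is bounded by $|F/qp|$ using von Staudt--Clausen and the estimate on $|\binom{1-s}{j}|$, and the $j=1$ term is killed by the symmetry $a\mapsto F-a$ together with $\chi(-1)=1$, $\omega(-1)=-1$. The only (cosmetic) difference is that you pair terms individually and bound each pair by $|s|\,|F|$, whereas the paper substitutes $a=F-b$ in the whole sum and concludes that the sum equals minus itself up to an error; your variant is, if anything, slightly cleaner about the factor of $2$ when $p=2$.
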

 \begin{proof} We use formula (\ref{wa511}) above and look at the series $\sum_{j=0}^{\infty} \binom{1-s}{j} \left( \frac{F}{a} \right)^j B_j$. The first two terms are $1+(1-s) \frac{-F}{2a}$. We claim that the $p$-adic absolute value of the other terms ($j \geq 2$)  is  less than or equal to $ | (s-1)F^2/qp |$. 
To this end, we note that $|1/j!|\le p^{(j-1)/(p-1)}$ and 
$$
\left|\binom{1-s}{j}\right| \le  |1-s|\,  p^{(j-1)/(p-1)}(qp^{-1/(p-1)})^{j-1}= |1-s|\, q^{j-1} 
$$
since we assumed that $|s| < qp^{-1/(p-1)}$. Since $|F|\leq \frac{1}{q}$, $|a|=1$, and $|B_j| \leq p$, we obtain 
 $$
\left| \binom{1-s}{j} \left( \frac{F}{a} \right)^j B_j \right| \leq |1-s| \,  q^{j-1} q^{2-j} |F|^2\, p = |1-s| \, |F|^2\, q p. $$
 Then (\ref{wa511}) implies
 $$ L_p(s, \chi) = \frac{1}{F} \frac{1}{s-1}  \sum^F_{\substack{a=1 \\ p\, \nmid\, a}} \chi(a) \langle a \rangle^{1-s}  + \frac{1}{2} \sum^F_{\substack{a=1 \\ p\, \nmid\, a}} \chi \omega^{-1}(a) \langle a \rangle^{-s} +  \delta(F/qp) .$$
It remains to show that the second sum can be absorbed into $\delta(F/qp)$. We have
\begin{align*} 
\sum^F_{\substack{a=1 \\ p\, \nmid\, a}} \chi \omega^{-1}(a) \langle a \rangle^{-s} & 
= \sum^F_{\substack{b=1 \\ p\, \nmid\, b}} \chi \omega^{-1}(F-b) \langle F-b \rangle^{-s} \\
& = - \sum^F_{\substack{b=1 \\ p\, \nmid\, b}} \chi \omega^{-1}(b) \langle b-F \rangle^{-s} \\
& = - \sum^F_{\substack{b=1 \\ p\, \nmid\, b}} \chi \omega^{-1}(b) \langle b \rangle^{-s} + \delta(F/qp^{-1/(p-1)}).
\end{align*}
The last step can be justified by noting that
$$
\frac{\langle b-F\rangle^{-s}}{\langle b\rangle^{-s}} = \left(1-\frac{F}{b}\right)^{-s}  = 1 +\sum_{j=1}^{\infty} \binom{-s}{j} \left(\frac{-F}{b}\right)^j
= 1 + \delta(F/qp^{-1/(p-1)}),
$$ 
since $|s|< qp^{-1/(p-1)}$ (this is the same estimate as earlier, without the presence of the Bernoulli number).
This proves  the proposition.
 \end{proof}

\begin{remark} For $F=fp^n$ and $n \rightarrow \infty$, formula (\ref{kl}) gives the original definition of $L_p(s,\chi)$ by Kubota and Leopoldt (see \cite{kubota}).
\end{remark}

\begin{remark} Suppose that $p\neq 2$. Then the error term in the above Proposition (as well as in the following Theorem \ref{expansionthm}) can be improved to $\delta(F/p^{2-(p-2)/(p-1)})$.
First we note that $B_j=0$ for odd $j \geq 3$. 
By the von Staudt--Clausen Theorem  (see \cite{washington} 5.10), we have for even $j \geq 2$: $|B_j| =p$ iff $(p-1) \mid j$, and otherwise $|B_j| \leq 1$.  
Furthermore, $|1/j!| =  p^{(j-S_j)/(p-1)}$, where $S_j$ is the sum of the digits of $j$, written to the base $p$ (see \cite{koblitz}). 
Since $j \equiv S_j \mod (p-1)$, $j \equiv 0 \mod (p-1)$ is equivalent to $S_j \equiv 0 \mod (p-1)$. We conclude that $|B_j |=p$ yields $S_j \geq p-1$ 
and $|1/j!| \leq  p^{(j-1)/(p-1) } p^{-(p-2)/(p-1)}$. This implies the above error term. We also see that this error term cannot be further improved.$\hfill\lozenge$
\end{remark}

Now we give the Dirichlet expansion of $L_p(s,\chi)$. 
 For $m \in \nn$, we denote by $\{x\}_{m}$ the unique representative of $x \mod m\zz$ between $0$ and $m-1$.
 
 \begin{theorem} Let $p$ be a prime number, $\chi$ be an even Dirichlet character of conductor $f$, and  $F$ 
a multiple of $q$ and $f$. Let $c>1$ be an integer satisfying $(c,F)=1$.
For $a \in \zz$, define
$$\epsilon_{a,c,F} = \frac{c-1}{2} - \{ -a F^{-1} \}_{c}
\, \in\left \{ - \frac{c-1}{2} ,\, - \frac{c-1}{2} + 1,\, \dots,\ \frac{c-1}{2}\right \}.$$
Then we have for $s \in \cc_p$ with $|s| < qp^{-1/(p-1)}$ the formula

 $$ -(1- \chi(c) \langle c \rangle^{1-s})  L_p(s, \chi) = \sum^F_{\substack{a=1 \\ p\, \nmid\, a}} \chi \omega^{-1}(a) \langle a \rangle^{-s} \epsilon_{a,c,F} + \delta(F/qp) .$$
 
 \label{expansionthm}
 \end{theorem}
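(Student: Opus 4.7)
The plan is to apply Proposition \ref{firstprop} twice, once at level $F$ and once at the enlarged level $cF$---both are multiples of $q$ and $f$---and to subtract so as to extract the factor $1-\chi(c)\langle c\rangle^{1-s}$. Set $\Sigma:=\sum_{A=1,\,p\nmid A}^{cF}\chi(A)\langle A\rangle^{1-s}$ and let $T$ denote the same sum restricted to multiples of $c$. Proposition \ref{firstprop} at level $cF$, multiplied by $c$, yields $cL_p(s,\chi)=\Sigma/(F(s-1))+\delta(F/qp)$. The level-$F$ version, multiplied by $c\chi(c)\langle c\rangle^{1-s}$, becomes $c\chi(c)\langle c\rangle^{1-s}L_p(s,\chi)=cT/(F(s-1))+\delta(F/qp)$; this uses the bijection $a\mapsto ca$ between $\{a\in\{1,\dots,F\}:p\nmid a\}$ and $\{A\in\{1,\dots,cF\}:p\nmid A,\,c\mid A\}$, together with the multiplicativity $\chi(ca)\langle ca\rangle^{1-s}=\chi(c)\langle c\rangle^{1-s}\chi(a)\langle a\rangle^{1-s}$. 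Subtracting and dividing by $c$ gives
$$
(1-\chi(c)\langle c\rangle^{1-s})L_p(s,\chi)\;=\;\frac{\Sigma-cT}{cF(s-1)}\;+\;\delta(F/qp).
$$

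I will then analyze the numerator combinatorially by parametrizing $A\in\{1,\dots,cF\}$ as $A=a+kF$ with $a\in\{1,\dots,F\}$ and $k\in\{0,\dots,c-1\}$. Since $f\mid F$ one has $\chi(A)=\chi(a)$, and since $q\mid F$ one has $\omega(A)=\omega(a)$, hence $\langle A\rangle=\langle a\rangle(1+kF/a)$. The divisibility $c\mid A$ is equivalent to $k\equiv -aF^{-1}\pmod c$, which selects the unique $k_a:=\{-aF^{-1}\}_c\in\{0,\dots,c-1\}$. Thus
$$
\Sigma-cT\;=\;\sum_{\substack{a=1\\p\nmid a}}^{F}\chi(a)\langle a\rangle^{1-s}\left[\sum_{k=0}^{c-1}(1+kF/a)^{1-s}-c(1+k_aF/a)^{1-s}\right].
$$

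Next I expand each $(1+tF/a)^{1-s}$ as a binomial series in $tF/a$ and track contributions by the index $j$. The $j=0$ pieces cancel ($c\cdot 1-c\cdot 1=0$), and the $j=1$ pieces contribute $(1-s)(F/a)\bigl[c(c-1)/2-ck_a\bigr]=c(1-s)(F/a)\epsilon_{a,c,F}$---exactly the coefficient predicted by the theorem. Division by $cF(s-1)$ and the identity $\chi(a)\langle a\rangle^{1-s}/a=\chi\omega^{-1}(a)\langle a\rangle^{-s}$ turn this main term into $-\sum_{a}\epsilon_{a,c,F}\,\chi\omega^{-1}(a)\langle a\rangle^{-s}$; negating both sides produces the claimed formula.

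The main obstacle is absorbing the $j\geq 2$ tail into $\delta(F/qp)$. I plan to reuse the estimate $|\binom{1-s}{j}|\leq |1-s|\,q^{j-1}$ from the proof of Proposition \ref{firstprop}---now without the factor $|B_j|\leq p$, since no Bernoulli number appears---combined with the observation that the integer $\sum_{k=0}^{c-1}k^j-ck_a^j$ has $p$-adic norm at most $1$. Each $j\geq 2$ summand is then bounded by $|1-s|\,q\,|F|^2$, and after division by $|cF(s-1)|=|F||s-1|$ the resulting error has $p$-adic norm at most $q|F|$, i.e.\ $\delta(F/q)$, which is subsumed by $\delta(F/qp)$. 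This combines straightforwardly with the $\delta(F/qp)$ error already present.
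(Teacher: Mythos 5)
Your proof is correct and follows essentially the same route as the paper: subtract two instances of Proposition \ref{firstprop} at levels $cF$ and $F$, group the level-$cF$ sum by residue classes modulo $F$, expand binomially keeping only $j=0,1$, and identify the $j=1$ coefficient $\sum_{k=0}^{c-1}k-ck_a=c\,\epsilon_{a,c,F}$ as the source of the main term, with the $j\ge 2$ tail absorbed into $\delta(F/q)\subseteq\delta(F/qp)$. The only (minor, harmless) difference is that you center the expansion at the small representative $a\in\{1,\dots,F\}$ rather than at the multiple $a_0c$ of $c$, which spares you the paper's final re-indexing step $\chi\omega^{-1}(a_0c)\langle a_0c\rangle^{-s}=\chi\omega^{-1}(b)\langle b\rangle^{-s}+\delta(F/q)$.
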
 
\begin{proof} Use (\ref{kl}) with $cF$ in place of $F$, and subtract $\chi(c) \langle c \rangle^{1-s}$ times (\ref{kl}) with $F$, to obtain

\begin{align}
\begin{split}
(1- \chi(c) \langle c \rangle^{1-s})  L_p(s, \chi) & = \frac{1}{cF} \frac{1}{s-1}  \sum^{cF}_{\substack{a=1 \\ p\, \nmid\, a}} \chi(a) \langle a \rangle^{1-s} \\ & - 
\frac{1}{F} \frac{1}{s-1}  \sum^F_{\substack{a=1 \\ p\, \nmid\, a}} \chi(ac) \langle ac \rangle^{1-s}  + \delta(F/qp).
\label{sum}
\end{split}
\end{align}

Let $0<a_0 < F$ with $(a_0,\, p)=1$. Since we assumed $(c,\, F)=1$ and $p \mid F$, there is a unique number of the form $a_0 c$ with $0<a_0c < cF$ and $(a_0 c,\, p)=1$ in each congruence class modulo $F$ relatively prime to $p$. The first sum in (\ref{sum}) can be written as
\begin{align*}
& \frac{1}{cF} \frac{1}{s-1}  \sum^{F}_{\substack{a_0=1 \\ p\, \nmid\, a_0}} \chi(a_0 c) \langle a_0 c \rangle^{1-s}  \left( \sum_{\substack{a=1 \\ a \equiv a_0 c \mod F}}^{cF} \left \langle 1+ \frac{a-a_0 c}{a_0 c} \right\rangle^{1-s} \right) \\
= & \frac{1}{cF} \frac{1}{s-1}  \sum^{F}_{\substack{a_0=1 \\ p\, \nmid\, a_0}} \chi(a_0 c) \langle a_0 c \rangle^{1-s}  \left( \sum_{\substack{a=1 \\ a \equiv a_0 c \mod F}}^{cF}  \left(1+ (1-s) \frac{a-a_0 c}{a_0 c} \right)  \right) + \delta(F/q) .
\end{align*}
Note that $| \frac{a-a_0 c}{a_0 c} | \leq |F|$, so this is the same type of estimate used in the proof of Proposition \ref{firstprop}. Subtracting the second sum in (\ref{sum}) yields
\begin{align*} 
(1- \chi(c)& \langle c \rangle^{1-s})  L_p(s, \chi)\\ & = \frac{-1}{cF} \sum^{F}_{\substack{a_0=1 \\ p\, \nmid\, a_0}} \chi(a_0 c) \langle a_0 c \rangle^{1-s}  \left( \sum_{\substack{a=1 \\ a \equiv a_0 c \mod F}}^{cF}  \frac{a-a_0 c}{a_0 c}  \right) + \delta(F/qp) \\
& = \frac{-1}{c} \sum^{F}_{\substack{a_0=1 \\ p\, \nmid\, a_0}} \chi \omega^{-1}(a_0 c) \langle a_0 c \rangle^{-s}  \left( \sum_{\substack{a=1 \\ a \equiv a_0 c \mod F}}^{cF}  \frac{a-a_0 c}{F}  \right) + \delta(F/qp) .
\end{align*}
We compute the inner sum. Let $b=\{a_0 c\}_F$. Then $a_0 c = b + \{ -F^{-1} b \}_c\, F$, since the latter sum is congruent to $b$ modulo  $F$ and congruent to $0$ modulo $c$. If $a$ satisfies $a \equiv a_0 c \mod F$ and $0<a<cF$, then $a=b+jF$ with $0\leq j <c$. Hence
$$ \sum_{\substack{a=1 \\ a \equiv a_0 c \mod F}}^{cF}  \frac{a-a_0 c}{F} = \sum_{j=0}^{c-1} (j -  \{ - F^{-1} b \}_c) = c\ \epsilon_{b,c,F}\ .$$
Since $b \equiv a_0 c \mod F$, we have $\chi \omega^{-1}(b) \langle b \rangle^{-s} = \chi \omega^{-1}(a_0 c) \langle a_0 c \rangle^{-s} + \delta(F/q)$ by the same estimate as earlier, so
$$ -(1- \chi(c) \langle c \rangle^{1-s})  L_p(s, \chi) = \sum^F_{\substack{b=1 \\ p\, \nmid\, b}} \chi \omega^{-1}(b) \langle b \rangle^{-s} \epsilon_{b,c,F} + \delta(F/qp).$$
This completes the proof.
\end{proof} 

We can take the limit of $F=fp^n$ as $n \rightarrow \infty$ and obtain: 

\begin{corollary} 
Let $p$ be a prime number, $\chi$ an even Dirichlet character of conductor $f$, and $c>1$ an integer satisfying $(c,pf)=1$. Then
we have for $s \in \cc_p$ with $|s| < qp^{-1/(p-1)}$,

$$  -(1- \chi(c) \langle c \rangle^{1-s})  L_p(s, \chi) = \lim_{n \rightarrow \infty}  \sum^{fp^n}_{\substack{a=1 \\ p\, \nmid\, a}} \chi \omega^{-1}(a)
\frac{ \epsilon_{a,c,fp^n} }{ \langle a \rangle^{s}}.$$

\label{limit}
\end{corollary}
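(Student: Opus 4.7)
The plan is to obtain the corollary as a direct consequence of Theorem \ref{expansionthm} by specializing $F = fp^n$ and sending $n \to \infty$. First I would verify that the hypotheses of the theorem are satisfied for this choice: since $(c,pf)=1$, we have $(c,fp^n)=1$ for all $n$; and $F = fp^n$ is automatically a multiple of $f$, while it is a multiple of $q$ as soon as $n$ is large enough (for odd $p$, $n \ge 1$ suffices; for $p=2$, we need $n$ large enough that $4 \mid fp^n$). Thus for all sufficiently large $n$, Theorem \ref{expansionthm} applies and yields
$$-(1- \chi(c) \langle c \rangle^{1-s})  L_p(s, \chi) = \sum^{fp^n}_{\substack{a=1 \\ p\, \nmid\, a}} \chi \omega^{-1}(a) \langle a \rangle^{-s} \epsilon_{a,c,fp^n} + \delta(fp^n/qp).$$

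Next I would control the error term. By definition of $\delta$, the remainder has $p$-adic absolute value at most $|fp^n/qp|_p = |f|_p |q|_p^{-1} p^{1-n}$, which tends to $0$ as $n \to \infty$. Hence the right side, minus its error term, converges $p$-adically to $-(1-\chi(c)\langle c\rangle^{1-s})L_p(s,\chi)$, and the corollary follows upon taking the limit.

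There is really no main obstacle: the work has already been done in Theorem \ref{expansionthm}, and the corollary is just the observation that the error bound $\delta(F/qp)$ tends to zero along the sequence $F = fp^n$. The only minor subtlety worth noting is that the individual terms $\epsilon_{a,c,fp^n}$ depend on $n$ through $F$, so one should not expect the partial sums themselves to stabilize termwise; it is only the full sum that converges, and its convergence is guaranteed purely by the uniform error estimate from the theorem rather than by any termwise argument.
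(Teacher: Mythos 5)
Your proposal is correct and matches the paper's approach exactly: the paper likewise obtains the corollary by setting $F = fp^n$ in Theorem \ref{expansionthm} and noting that the error term $\delta(fp^n/qp)$ tends to $0$ $p$-adically as $n \to \infty$. Your remark that convergence is of the full sum via the uniform error bound, rather than termwise, is a correct and worthwhile clarification.
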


The next Theorem shows that a finite number of Euler factors can be factored off in a similar way as in \cite{eulerfactors}, where a {\em weak Euler product} was obtained.  The main statement is that the remaining Dirichlet series has the expected form, similar to the complex case. 

 \begin{theorem} 
 Let $p$ be a prime number and let $\chi$ be an even Dirichlet character of conductor $f$. Let
$S$ be any finite (or empty) set of primes not containing $p$ and set $S^+ = S \cup \{p\}$. Let
$F$ be  
a multiple of $q$, $f$ and all primes in $S$. Let $c>1$ be an integer satisfying $(c,F)=1$.
Then we have for $s \in \cc_p$ with $|s| < qp^{-1/(p-1)}$ the formula

\begin{align*}  -(1- \chi(c) \langle c \rangle^{1-s}) \cdot \prod_{l \in S} (1-\chi \omega^{-1}(l) \langle l \rangle^{-s}    )\ \cdot\  L_p(s, \chi)  = \\
 \sum^F_{\substack{a=1 \\ (a,S^+)=1}} \chi \omega^{-1}(a) \frac{\epsilon_{a,c,F}}{\langle a \rangle^{s} } \  + \ \delta(F/qp) . \end{align*}
 
\end{theorem}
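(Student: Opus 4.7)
The strategy is to combine Theorem \ref{expansionthm} with an inclusion--exclusion sieve for the primes in $S$. First, Theorem \ref{expansionthm} yields
$$-(1-\chi(c)\langle c\rangle^{1-s})L_p(s,\chi) = \sum_{\substack{a=1\\p\,\nmid\,a}}^F \chi\omega^{-1}(a)\frac{\epsilon_{a,c,F}}{\langle a\rangle^s} + \delta(F/qp),$$
and multiplying through by $\prod_{l\in S}(1-\chi\omega^{-1}(l)\langle l\rangle^{-s})$ preserves the error, since each factor has $p$-adic absolute value at most $1$. Thus the theorem reduces to the sieving identity
$$\prod_{l\in S}(1-\chi\omega^{-1}(l)\langle l\rangle^{-s})\sum_{\substack{a=1\\p\,\nmid\,a}}^F \chi\omega^{-1}(a)\frac{\epsilon_{a,c,F}}{\langle a\rangle^s} = \sum_{\substack{a=1\\(a,S^+)=1}}^F \chi\omega^{-1}(a)\frac{\epsilon_{a,c,F}}{\langle a\rangle^s} + \delta(F/qp).$$

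Next, expand both sides by inclusion--exclusion over subsets $T\subseteq S$, writing $d_T=\prod_{l\in T}l$ and noting $d_T\mid F$ and $(d_T,cp)=1$. On the left, distribute the product; on the right, use the M\"obius identity $\sum_{T\subseteq S}(-1)^{|T|}\mathbf{1}_{d_T\mid a}=\mathbf{1}_{(a,d_S)=1}$ (with $d_S=\prod_{l\in S}l$) applied to the sieving condition. Matching terms by $T$, it suffices to verify, for each $T$,
$$\chi\omega^{-1}(d_T)\langle d_T\rangle^{-s}\sum_{\substack{a=1\\p\,\nmid\,a}}^F \chi\omega^{-1}(a)\frac{\epsilon_{a,c,F}}{\langle a\rangle^s} = \sum_{\substack{a=1\\p\,\nmid\,a,\,d_T\mid a}}^F \chi\omega^{-1}(a)\frac{\epsilon_{a,c,F}}{\langle a\rangle^s} + \delta(F/qp).$$
Substitute $a=d_T a'$ on the right, which is legitimate because $d_T\mid F$ and $(d_T,p)=1$. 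The crucial computational input is the identity
$$\epsilon_{d_T a',\,c,\,F}=\epsilon_{a',\,c,\,F/d_T},$$
a direct consequence of $(F/d_T)^{-1}\equiv d_T F^{-1}\pmod c$. Together with $\chi\omega^{-1}(d_T a')=\chi\omega^{-1}(d_T)\chi\omega^{-1}(a')$ and $\langle d_T a'\rangle^{-s}=\langle d_T\rangle^{-s}\langle a'\rangle^{-s}$, this rewrites the right-hand side as $\chi\omega^{-1}(d_T)\langle d_T\rangle^{-s}$ times the analogous sum with modulus $F/d_T$.

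After cancelling the common factor, the task reduces to the comparison
$$\sum_{\substack{a=1\\p\,\nmid\,a}}^{F} \chi\omega^{-1}(a)\frac{\epsilon_{a,c,F}}{\langle a\rangle^s} = \sum_{\substack{a'=1\\p\,\nmid\,a'}}^{F/d_T} \chi\omega^{-1}(a')\frac{\epsilon_{a',c,F/d_T}}{\langle a'\rangle^s} + \delta(F/qp).$$
This follows from applying Theorem \ref{expansionthm} to the moduli $F$ and $F/d_T$: both sides equal $-(1-\chi(c)\langle c\rangle^{1-s})L_p(s,\chi)$ up to $\delta(F/qp)$ and $\delta((F/d_T)/qp)$ respectively, which have the same $p$-adic size since $|d_T|_p=1$.

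The main obstacle is justifying Theorem \ref{expansionthm} on the modulus $F/d_T$, which requires $F/d_T$ to be a multiple of $q$ and $f$. This is automatic when $(d_T,f)=1$, but in general forces one to enlarge $F$ to carry extra $l$-adic valuation at each prime $l\in S$ dividing $f$, prove the statement there, and then recover the given $F$ via a continuity argument (or via Corollary \ref{limit}). The $\epsilon$-identity itself, while routinely verified, is the arithmetic crux that makes the sieve commute with the $F$-truncation of the sum.
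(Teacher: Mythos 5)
Your argument is correct, and it takes a genuinely different route from the paper's. The paper proceeds by induction on $|S|$, adjoining one prime $l$ at a time and passing from the modulus $F$ to the \emph{larger} modulus $lF$: the identity $\epsilon_{la,c,lF}=\epsilon_{a,c,F}$ identifies the subtracted sum with the terms of the $lF$-truncation divisible by $l$, and the comparison of the two truncations is supplied by applying Theorem \ref{expansionthm} (resp.\ the induction hypothesis) at both moduli. You instead run the whole sieve at once by inclusion--exclusion and \emph{descend} from $F$ to $F/d_T$, using the same $\epsilon$-identity read in the opposite direction; the needed comparison of the truncated sums at $F$ and at $F/d_T$ is again Theorem \ref{expansionthm} at two admissible moduli, and your error bookkeeping ($|d_T|_p=1$, the ultrametric bound over the $2^{|S|}$ subsets, the Euler factors having absolute value at most $1$) is in order. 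The two proofs are essentially mirror images; yours gives a one-shot, non-inductive derivation at the price of having to divide the modulus, while the paper's never leaves the class of moduli to which Theorem \ref{expansionthm} applies.

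The only correction concerns your final paragraph: the ``main obstacle'' is not actually there. If some $l\in T$ divides $f$, then $\chi\omega^{-1}(d_T)=0$, so the $T$-term on the product side vanishes; and on the sum side every $a$ with $d_T\mid a$ has $\gcd(a,f)>1$, hence $\chi\omega^{-1}(a)=0$ and that partial sum vanishes too. So the $T$-identity reads $0=0$ and never invokes Theorem \ref{expansionthm}. For all remaining $T$ one has $(d_T,f)=1$, and then $F/d_T$ is automatically a multiple of $q$ and $f$ with $(c,F/d_T)=1$, so the theorem applies directly; no enlargement of $F$ and no continuity argument is required. This is worth fixing, because the workaround you sketch is the one genuinely under-justified step: to compare the sieved sums at $F$ and at a multiple $MF$ without circularity you would need the distribution relation $\sum_{j=0}^{M-1}\epsilon_{a+jF,c,MF}=\epsilon_{a,c,F}$ (which holds because $\epsilon$ is the value of the measure $E_{1,c}$, cf.\ Theorem \ref{measure}), and neither ``continuity'' nor Corollary \ref{limit} supplies that on its own.
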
 

\begin{proof} We prove the statement by induction on $|S|$. By Theorem \ref{expansionthm}, the formula is true for $S=\varnothing$. Now assume the formula is true for $S$, and $l \neq p$ is a prime with $l \notin S$ and $(c,l)=1$. 
It suffices to prove the following formula:

\begin{align} 
\begin{split}
(1-\chi \omega^{-1}(l) \langle l \rangle^{-s}   )  \sum^{F}_{\substack{a=1 \\ (a,S^+)=1}} \chi \omega^{-1}(a) \langle a \rangle^{-s} \epsilon_{a,c,F}\  = \\
\sum^{lF}_{\substack{a=1 \\ (a,S^+ \cup \{l\})= 1}} \chi \omega^{-1}(a) \langle a \rangle^{-s} \epsilon_{a,c,lF}\
+ \ \delta(F/qp) .
\label{claim}
\end{split}
\end{align}
Note that $|1-\chi \omega^{-1}(l) \langle l \rangle^{-s} | \leq 1$ and $|lF|=|F|$, so we can keep the error term.
We can use $lF$ in place of $F$ and write the left side of (\ref{claim}) as
\begin{align}  \sum^{lF}_{\substack{a=1 \\ (a,S^+)=1}} \chi \omega^{-1}(a) \langle a \rangle^{-s} \epsilon_{a,c,lF} \ -
  \sum^{F}_{\substack{a=1 \\ (a,S^+)=1}} \chi \omega^{-1}(la) \langle la \rangle^{-s} \epsilon_{a,c,F} + \delta(F/qp) .
  \label{difference}
  \end{align}
  Now we have
  $$ \epsilon_{la,c,lF} = \frac{c-1}{2} - \{ -la (lF)^{-1} \}_{c} = \frac{c-1}{2} - \{ -a F^{-1} \}_{c} = \epsilon_{a,c,F}. $$
  Thus (\ref{difference}) is equal to 
\begin{align*}  
  \sum^{lF}_{\substack{a=1 \\ (a,S^+)=1}} \chi \omega^{-1}(a) \langle a \rangle^{-s} \epsilon_{a,c,lF} \ & - 
  \sum^{F}_{\substack{a=1 \\ (a,S^+)=1 }} \chi \omega^{-1}(la) \langle la \rangle^{-s} \epsilon_{la,c,lF} + \delta(F/qp) \\
  & = \sum^{lF}_{\substack{a=1 \\ (a,S^+)= 1 \\ l \,\nmid\, a}} \chi \omega^{-1}(a) \langle a \rangle^{-s} \epsilon_{a,c,lF}\
+ \ \delta(F/qp),
\end{align*}
 which shows equation (\ref{claim}). 
\end{proof}

\begin{remark} What happens if $S$ contains more and more primes? It is well known that the Euler product does not converge $p$-adically  (see \cite{delbourgo2009}), since the factors \linebreak $(1-\chi \omega^{-1}(l) \langle l \rangle^{-s}   )$ have absolute value $\leq 1$ and do not converge to $1$ as $l \rightarrow \infty$. Furthermore, there are infinitely many primes $l$ with $\chi \omega^{-1} (l) = 1$ and  
$ (1 - \langle l \rangle^{-s})^{-1}$ has a pole at $s=0$. We have for $l \neq p$ and  $|s| < qp^{-1/(p-1)}$,
$$ 1 - \langle l \rangle^{-s} = - \sum_{j=1}^{\infty} \binom{-s}{j} ( \langle l \rangle -1 )^j .$$
The $p$-adic absolute value of each term of the above series is less than
$$  (qp^{-1/(p-1)})^j p^{(j-1)/(p-1)} q^{-j} = p^{-1/(p-1)} < 1 .$$
Hence the product $\prod_{l \in S} (1-\chi \omega^{-1}(l) \langle l \rangle^{-s}  )$ 
approaches $0$ as $S$ expands to include all primes. 
\end{remark}

\section{Regularized Bernoulli Distributions}
\label{regularized}

Let $p$ be a prime number and let $d$ be a positive integer with $(d,p)=1$. Define $X_n = (\zz/dp^n \zz)$ and $X = \varprojlim X_n \cong \zz/d \zz \times \zz_p$.
Let $k \geq 1$ be an integer. 
Then the  {\em Bernoulli distribution} $E_k$ on $X$ is defined by 
$$ E_k( a + dp^ n X) = (dp^n)^{k-1} \frac{1}{k} B_k \left(\frac{ \{ a \}_{d p^n} } {d p^n} \right) , $$
where $B_k(x)$ is the $k$-th Bernoulli polynomial and $B_k=B_k(0)$ are the Bernoulli numbers (see \cite{koblitz}, \cite{lang}). For $k=1$, one has  $B_1(x)= x - \frac{1}{2}$.
Choose $c \in \zz$ with $c \neq 1$ and $(c,dp)=1$.     
Then the {\em regularization} $E_{k,c}$ of $E_k$ is defined by
$$ E_{k,c} ( a + dp^n X ) = E_k ( a + dp^n X) - c^k E_k \left( \left\{ \frac{a}{c}  \right\}_{d p^n} + d p^n X \right) .$$
One shows that the regularized Bernoulli distributions $E_{k,c}$ are {measures} (see \cite{lang}). In the following, we consider only $k=1$; the cases $k \geq 2$ are similar.

\begin{theorem} Let $p$ be a prime, $c,\, d \in \nn$, and $c \geq 2$ such that $(c,dp)=1$. Let $X$ be as above, and let $E_{1,c}$ be the regularized Bernoulli distribution on $X$.
For  $a \in \{0,1,\dots, dp^n-1\}$, we have
$$ E_{1,c}  ( a + dp^n X) = \frac{c-1}{2} - \{ -a (dp^n)^{-1} \}_{c} = \epsilon_{a,c,dp^n}.$$

\label{measure}
\end{theorem}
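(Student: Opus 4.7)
The plan is a direct computation from the definitions. Write $F = dp^n$ for brevity, and recall $B_1(x) = x - 1/2$. For $a \in \{0, 1, \dots, F-1\}$ we have $\{a\}_F = a$, so unpacking gives
\begin{equation*}
E_1(a + FX) = B_1\!\left(\tfrac{a}{F}\right) = \frac{a}{F} - \frac{1}{2}.
\end{equation*}
Let $b = \{a/c\}_F$, that is, the representative in $\{0, 1, \dots, F-1\}$ of $ac^{-1} \bmod F$ (well-defined since $(c,dp)=1$). Then $E_1(b + FX) = b/F - 1/2$, and substituting into the definition of the regularization yields
\begin{equation*}
E_{1,c}(a + FX) = \left(\frac{a}{F} - \frac{1}{2}\right) - c\left(\frac{b}{F} - \frac{1}{2}\right) = \frac{a - cb}{F} + \frac{c-1}{2}.
\end{equation*}

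The heart of the proof is identifying the integer $(cb - a)/F$ with $\{-aF^{-1}\}_c$. By construction $cb \equiv a \pmod F$, so $cb - a = jF$ for a unique $j \in \zz$. The bounds $0 \le a \le F-1$ and $0 \le b \le F-1$ give $-(F-1) \le cb - a \le cF - c$, which forces $j \in \{0, 1, \dots, c-1\}$. Reducing the identity $cb - a = jF$ modulo $c$ and using $(c,F)=1$ gives $j \equiv -aF^{-1} \pmod c$, and since $j$ already lies in the canonical range $\{0,\dots,c-1\}$, we conclude $j = \{-aF^{-1}\}_c$. Therefore
\begin{equation*}
E_{1,c}(a + FX) = -j + \frac{c-1}{2} = \frac{c-1}{2} - \{-aF^{-1}\}_c = \epsilon_{a,c,F},
\end{equation*}
which is the claim.

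The only real subtlety is the bookkeeping: one must treat $\{a/c\}_F$ as the canonical representative of the modular inverse relation $cb \equiv a \pmod F$, and then extract the single integer $j$ both from the integer identity $cb - a = jF$ (for the range) and from its reduction modulo $c$ (for the residue class). Everything else is a mechanical substitution into $B_1(x) = x - 1/2$, so I do not anticipate any deeper obstacle.
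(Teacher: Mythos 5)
Your proof is correct and follows essentially the same route as the paper's: a direct substitution into $B_1(x)=x-\tfrac12$ followed by identifying the canonical representative $\{c^{-1}a\}_{dp^n}$, which the paper writes down explicitly as $(\{-a(dp^n)^{-1}\}_c\, dp^n + a)/c$ and verifies, while you equivalently extract the integer $j=(cb-a)/(dp^n)$ from the congruence and the range bounds. The two arguments differ only in bookkeeping, not in substance.
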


\begin{proof} 
By definition,
$$E_{1,c}(a+dp^n X) = E_1(a + dp^n X) - c E_1(c^{-1}a + dp^n X) = \frac{a}{dp^n} - \frac{1}{2} - c \left(\frac{ \{c^{-1} a \}_{dp^n}} {dp^n} \right) + \frac{c}{2}.  $$
We give the standard representative of $c^{-1} a \mod dp^n$: 
$$ \{c^{-1} a \}_{dp^n} = \frac{  \{ -a (dp^n)^{-1} \}_{c}\ dp^n + a}{c} $$
Note that the numerator is divisible by $c$, since $ \{ -a (dp^n)^{-1} \}_{c}\ dp^n \equiv -a \mod c$. Hence the quotient is an integer between $0$ and $dp^n-1$.  Furthermore, the numerator is congruent to $a$ modulo $dp^n$, and so the quotient has the desired property.
We obtain
 $$ E_{1,c}(a+dp^n X) = \frac{a}{dp^n} +  \frac{c-1}{2} - \frac{  \{ -a (dp^n)^{-1} \}_{c}\ dp^n + a}{dp^n} = \frac{c-1}{2} - \{ -a (dp^n)^{-1} \}_{c} $$
 which is the assertion.
\end{proof}

Now the Dirichlet series expansion in Corollary \ref{limit} follows from Theorem \ref{measure} and the integral formula (\ref{wa122}).

\section{Expansions for different regularization parameters}
\label{examples}

We look at the coefficients  $\epsilon_{a,c,dp^n}$ for different parameters $c$ and the resulting Dirichlet series expansions. The following observation follows directly from the definition.
\begin{remark} The sequence of values $E_{1,c}  ( a + dp^n X)  = \epsilon_{a,c,dp^n}$ for  $a=$ $0$,\linebreak $1$, $2$, $\dots$, $dp^n-1$ is periodic with period $c$. The sequence begins with $\frac{c-1}{2}$ and continues with a permutation of $\frac{c-3}{2}, \dots, - \frac{c-1}{2}$. If we restrict to values of $n$ such that $dp^n$ lies in a fixed congruence class modulo $c$, then the values do not change as $n \rightarrow \infty$. $\hfill$ $\lozenge$
\end{remark}

The measure $E_{1,c}$ and the Dirichlet series expansion are particularly simple for $c=2$.
Note that we assumed that $d$ and $p$ are odd in this case.
If $a$ is even, then $ \{ -a (dp^n)^{-1} \}_{2} = 0$ and 
$$ E_{1,2}(a+dp^n X)  =  \epsilon_{a,2,dp^n} = \frac{1}{2}.$$
If $a$ is odd, then $-a(dp^n)^{-1}$ is odd, $\{ -a (dp^n)^{-1} \}_{2} = 1$ and 
$$ E_{1,2}(a+dp^n X) =  \epsilon_{a,2,dp^n}   = - \frac{1}{2}. $$
Hence $E_{1,2}$ is up to the factor $\frac{1}{2}$ equal to the following simple measure:
\begin{definition} Let $ p \neq 2$ be a prime, and let $X \cong \zz/d\zz \times \zz_p$ be as above. Then 
$$ \mu(a + dp^n X) = (-1)^{\{a\}_{dp^n}} $$
defines a measure on $X$. We call $\mu$ the {\em alternating measure}, since the measure of all clopen balls is $\pm 1$.\hfill$\lozenge$ 
\end{definition}
The corresponding integral is also called the {\em fermionic} $p$-adic integral (see \cite{kim}).\\

Now we obtain the following Dirichlet series expansion from Corollary \ref{limit}.

\begin{corollary} 
Let $p \neq 2$ be a prime number, and let $\chi$ be an even Dirichlet character of odd conductor $f$. Then
we have for $s \in \cc_p$ with $|s| < p^{(p-2)/(p-1)}$,
$$  (1- \chi(2) \langle 2 \rangle^{1-s})  L_p(s, \chi) = \lim_{n \rightarrow \infty} \frac12 \sum^{fp^n}_{\substack{a=1 \\ p\, \nmid\, a}}(-1)^{a+1} \chi \omega^{-1}(a) \frac{1}{\langle a \rangle^{s}} .$$
For $\chi=\omega^{1-i}$ and odd $i=1,\, \dots,\, p-2$, we obtain the branches of the $p$-adic zeta function:
$$ \zeta_{p,i}(s) = L_p(s,\, \omega^{1-i}) = \frac{1}{1-\omega(2)^{1-i} \langle 2 \rangle^{1-s}}  \cdot  \lim_{n \rightarrow \infty} \frac12\sum_{\substack{a=1 \\ p\, \nmid\, a }}^{p^n} (-1)^{a+1}\omega(a)^{-i}\frac{1}{ \langle a \rangle^{s}} $$

\label{limit2}
\end{corollary}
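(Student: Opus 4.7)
The plan is to specialize Corollary \ref{limit} to the parameter $c = 2$ and substitute the explicit values of $\epsilon_{a,2,fp^n}$ computed in the paragraph immediately preceding the statement. Since $p \neq 2$ and $f$ is odd, the hypothesis $(c,pf) = 1$ holds with $c = 2$, and the radius of convergence $qp^{-1/(p-1)}$ simplifies (using $q = p$) to $p^{(p-2)/(p-1)}$, matching the disk stated in the corollary.

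For the first identity, I would substitute $\epsilon_{a,2,fp^n} = \tfrac{1}{2}$ for $a$ even and $-\tfrac{1}{2}$ for $a$ odd---equivalently $\epsilon_{a,2,fp^n} = -\tfrac{(-1)^{a+1}}{2}$---into the right-hand side of Corollary \ref{limit}, and then absorb the resulting minus sign into the leading minus on the left-hand side of that corollary. This immediately produces
\begin{equation*}
(1 - \chi(2)\langle 2\rangle^{1-s})\, L_p(s,\chi) \;=\; \lim_{n\to\infty} \tfrac{1}{2} \sum_{\substack{a=1 \\ p\,\nmid\, a}}^{fp^n} (-1)^{a+1}\chi\omega^{-1}(a)\,\langle a\rangle^{-s},
\end{equation*}
which is exactly the first display of the corollary.

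For the branches of the $p$-adic zeta function, I would specialize to $\chi = \omega^{1-i}$ with odd $i \in \{1,3,\ldots,p-2\}$. Then $1-i$ is even, so $\omega^{1-i}$ is an even character whose conductor is $1$ or $p$---in either case odd---so the hypotheses of the first identity are satisfied. Using $\chi\omega^{-1} = \omega^{-i}$ and $\chi(2) = \omega(2)^{1-i}$, the first display specializes to an equation for $(1 - \omega(2)^{1-i}\langle 2\rangle^{1-s})\,\zeta_{p,i}(s)$, and dividing by the nonvanishing factor $1 - \omega(2)^{1-i}\langle 2\rangle^{1-s}$ on the open set where it is invertible produces the stated expansion of $\zeta_{p,i}$. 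The only input beyond Corollary \ref{limit} is the parity computation giving $\epsilon_{a,2,fp^n} = \pm \tfrac{1}{2}$, which has already been carried out in the text preceding the statement; there is no serious obstacle.
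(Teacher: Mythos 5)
Your proposal is correct and follows the paper's own route exactly: the paper computes $\epsilon_{a,2,dp^n}=\tfrac12$ for even $a$ and $-\tfrac12$ for odd $a$ in the paragraph preceding the corollary and then deduces the statement directly from Corollary \ref{limit}, just as you do. The sign bookkeeping ($\epsilon_{a,2,fp^n}=-\tfrac{(-1)^{a+1}}{2}$ cancelling the minus sign in Corollary \ref{limit}) and the simplification $qp^{-1/(p-1)}=p^{(p-2)/(p-1)}$ for odd $p$ are both handled correctly.
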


\begin{remark}
Dirichlet series expansions of $p$-adic $L$-functions were studied by D. Delbourgo  in \cite{delbourgo2006dirichlet} and  \cite{delbourgo2009}. 
He considers Dirichlet characters $\chi$ satisfying $(p,\, 2 f  \phi(f))=1$ and their Teichm\"uller twists. 
We obtain the same expansion for $c=2$ and $\chi= \omega^{1-i}$. However, we require $(c,\, fp)=1$ and use other methods for the proof.  

Similar expansions for a slightly different $p$-adic $L$-function using a fermionic $p$-adic integral (i.e., $c=2$)  were also obtained by M.-S. Kim and S. Hu (see \cite{kim}).

\end{remark}

\begin{example}
We look at the case $c=3$. The sequence of values $\epsilon_{a,3,dp^n}$ is periodic with period $3$. If $dp^n \equiv 1 \mod 3$, then the sequence is $1,\ -1,\ 0,\, \dots$ . If $dp^n \equiv 2 \mod 3$, then we obtain the sequence $1,\ 0,\ -1,\, \dots$ .
\end{example}

\begin{corollary}  
Let $p$ be a prime number, and let $\chi$ be an even Dirichlet character of conductor $f=dp^m$ such that $(3,\, dp)=1$.
If $d \equiv 1 \mod 3$, then define a sequence $\epsilon_0=1$, $\epsilon_1=-1$, $\epsilon_2=0$, $\dots$ with period $3$. Otherwise, set $\epsilon_0=1$, $\epsilon_1=0$, $\epsilon_2=-1$ and extend it with period $3$.
Then
we have for $s \in \cc_p$ with $|s| < qp^{-1/(p-1)}$,
$$  -(1- \chi(3) \langle 3 \rangle^{1-s})  L_p(s, \chi) = \lim_{n \rightarrow \infty}  \sum^{dp^{2n}}_{\substack{a=1 \\ p\, \nmid\, a}} \chi \omega^{-1}(a)\frac{\epsilon_a}{ \langle a \rangle^{s} }.$$
\label{limit3}
\end{corollary}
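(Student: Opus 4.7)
The plan is to apply Theorem \ref{expansionthm} directly along the sequence $F = dp^{2n}$ (rather than $F=fp^n$ as in Corollary \ref{limit}), chosen so that the coefficients $\epsilon_{a,3,F}$ stabilize. The key arithmetic observation is that $(3,p)=1$ forces $p^2\equiv 1\mod 3$, so $dp^{2n}$ always lies in the single residue class $d\mod 3$, independent of $n$. For $n$ large enough, $F_n := dp^{2n}$ is also a multiple of $q$ and of $f=dp^m$, and $(3,F_n)=1$, so Theorem \ref{expansionthm} with $c=3$ yields
$$-(1-\chi(3)\langle 3\rangle^{1-s})L_p(s,\chi) = \sum^{dp^{2n}}_{\substack{a=1 \\ p\, \nmid\, a}} \chi\omega^{-1}(a)\frac{\epsilon_{a,3,dp^{2n}}}{\langle a\rangle^s} + \delta(dp^{2n}/qp).$$
The error term $\delta(dp^{2n}/qp)$ has $p$-adic absolute value tending to zero as $n\to\infty$, so passing to the limit will deliver the claimed identity provided the coefficients on the right are identified with the proposed sequence $\epsilon_a$.

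The remaining step is to compute $\epsilon_{a,3,dp^{2n}} = 1 - \{-a(dp^{2n})^{-1}\}_3$. Since $dp^{2n}\equiv d\mod 3$, also $(dp^{2n})^{-1}\equiv d^{-1}\mod 3$, so the coefficient depends only on $a\mod 3$ and on $d\mod 3$. A brief case check handles the two possibilities: when $d\equiv 1\mod 3$ one has $-ad^{-1}\equiv -a\mod 3$, which gives the triple $1,-1,0$ as $a$ runs through the residues $0,1,2\mod 3$; when $d\equiv 2\mod 3$ one has $d^{-1}\equiv 2$ and $-ad^{-1}\equiv a\mod 3$, producing $1,0,-1$. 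Both outcomes match the periodic sequence $\epsilon_a$ defined in the statement.

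I do not anticipate a serious obstacle. The only non-mechanical insight is that one must replace $fp^n$ by $dp^{2n}$ in order to keep $F\mod 3$ fixed, which in turn makes the sequence $\epsilon_a$ independent of $n$; this is precisely the stabilization principle noted in the remark preceding the corollary. Beyond that, the proof is a direct application of Theorem \ref{expansionthm} together with the short modular computation above.
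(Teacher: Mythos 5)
Your proposal is correct and follows exactly the route the paper intends: Corollary \ref{limit3} is deduced from Theorem \ref{expansionthm} with $F=dp^{2n}$, using $p^2\equiv 1\bmod 3$ to fix the residue class of $F$ modulo $3$ so that the coefficients $\epsilon_{a,3,dp^{2n}}$ stabilize, together with the short case check on $d\bmod 3$. Your verification of the two coefficient patterns $1,-1,0$ and $1,0,-1$ matches the paper's Example for $c=3$.
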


\begin{example} For $c=5$, we get a periodic sequence with period $5$ and we have $\epsilon_{a,5,dp^n}=2$ for $a \equiv 0 \mod 5$. The next four coefficients are a permutation of the values $-2$, $-1$, $0$ and $1$, depending on the class of $dp^n \mod 5$.
\end{example}

\begin{example} Let $c=7$. Then $\epsilon_{0,7,dp^n} = 3$. Now suppose, for example, that 
$dp^n \equiv 3 \mod 7$. Then $(dp^n)^{-1} \equiv 5 \mod 7$. This yields the values
$$  \epsilon_{1,7,dp^n} = 1,\ \epsilon_{2,7,dp^n} = -1,\ \epsilon_{3,7,dp^n} = -3,\ \epsilon_{4,7,dp^n} = 2,\
  \epsilon_{5,7,dp^n} = 0,\ \epsilon_{6,7,dp^n} = -2, $$
and these are extended with period $7$. 
\end{example}

\noindent{\bf Acknowledgements. }
The first author thanks Daniel Delbourgo for hints to his work and helpful conversations.

\bibliographystyle{plain}

\end{document}